\newtheorem{thm}{Theorem}
\newtheorem{proposition}[thm]{Proposition}
\newtheorem{remark}{Remark}
\newtheorem{definition}{Definition}
\newtheorem{assumption}{Assumption}
\title{
Periodic control laws for bilinear quantum systems with discrete spectrum
}
\author{\authorblockN{Nabile Boussa\"{i}d}
\authorblockA{Laboratoire de math\'ematiques\\
Universit\'e de Franche--Comt\'e\\
25030 Besan\c{c}on, France\\
{\tt\small Nabile.Boussaid@univ-fcomte.fr}}
\and
\authorblockN{Marco Caponigro}
\authorblockA{Institut \'Elie Cartan de Nancy and\\
INRIA Nancy Grand Est\\
54506 Vand{\oe}uvre, France\\
{\tt\small Marco.Caponigro@inria.fr}}
\and
\authorblockN{Thomas Chambrion}
\authorblockA{Institut \'Elie Cartan de Nancy and\\
INRIA Nancy Grand Est\\
54506 Vand{\oe}uvre, France\\
{\tt\small Thomas.Chambrion@inria.fr}}
}
\begin{document}

\maketitle
\thispagestyle{empty}
\pagestyle{empty}

\begin{abstract}
We provide bounds on the error between dynamics of an infinite dimensional bilinear Schr\"{o}dinger equation 
and of its finite dimensional Galerkin approximations. 
Standard averaging methods are  used on the finite dimensional approximations 
to obtain constructive controllability results. As an illustration, the methods are applied on a model of a 2D rotating molecule.
\end{abstract}

\section{INTRODUCTION}

\subsection{Physical context}

The state of a quantum system evolving on a finite dimensional Riemannian
manifold $\Omega$, with associated measure $\mu$, is described by its \emph{wave
function}, that is, a point in the unit sphere of $L^2(\Omega, \mathbf{C})$. A
system with wave function $\psi$ is in a subset $\omega$ of $\Omega$ with the
probability $\displaystyle{\int_{\omega} \!\!\!|\psi|^2 \mathrm{d}\mu}$.

 When submitted to an
excitation by an external  field ({\it e.g.} a laser) the time evolution of the wave 
function is governed by the bilinear Schr\"odinger
equation 
\begin{equation}\label{eq:blse}
\mathrm{i} \frac{\partial \psi}{\partial t}=-\frac{1}{2}\Delta \psi +V(x)
\psi(x,t) +u(t) W(x) \psi(x,t),
\end{equation}
where $V, W:\Omega\rightarrow \mathbf{R}$ are real functions describing  respectively the
physical properties of the uncontrolled system and the external field, and 
$u:\mathbf{R}\rightarrow \mathbf{R}$ is a real function of the time representing the intensity of the latter.

\subsection{Quantum control}

A natural question, with many practical implications, is whether there exists a
control $u$ that steers the quantum system  from a given initial position
to a given target.

Considerable efforts have been made to study the
controllability of~(\ref{eq:blse}). We refer to \cite{turinici,nersesyan,
beauchard-mirrahimi, mirrahimi-continuous, camillo,Schrod2} and references
therein for a description of the known theoretical results concerning the
existence of  controls steering a given source to a given target. As proved in
\cite{nersesyan, genericity-mario-paolo, genericity-mario-privat}, approximate
controllability is a generic property for systems of the type
(\ref{eq:blse}).

The main difficulty in the study of (\ref{eq:blse}) is the fact that the
natural state space, namely $L^2(\Omega,\mathbf{C})$, has infinite
dimension. To avoid difficulties when dealing with infinite dimensional systems,
for example when studying practical computations or simulations, one can project
system  (\ref{eq:blse}) on finite dimensional subspaces of
$L^2(\Omega,\mathbf{C})$. Obviously, a crucial issue is to guarantee that the finite
dimensional approximations have dynamics close to the one of
the original infinite dimensional system.

\subsection{Aim and content of the paper}

The contribution of this paper is twofold. First, in Section \ref{SEC_Weakly-coupled}, we provide an introduction to the class of \emph{weakly-coupled} bilinear systems (see Definition \ref{DEF_weakly_coupled}). A feature of these systems is that their dynamics is precisely approached by the dynamics of their Galerkin approximations (Proposition \ref{PRO_Good_Galerkin_approximtion}).
 In a second part, we apply
 general averaging theory for the approximate control of finite dimensional bilinear conservative systems using small amplitude periodic control laws.  The method is both very selective with respect to the frequency (which is a good point for quantum control) and extremely robust with respect to the shape of the control (Section \ref{SEC_finite_dimension}). Moreover, it provides easy and explicit estimates for the controllability time, the $L^1$ norm of the control and the error. Together with the results of Section \ref{SEC_Weakly-coupled}, this method provides a complete solution for the approximate control of infinite dimensional bilinear quantum systems with discrete spectrum and time estimates. As an illustration, we consider the rotation of a planar dipolar molecule in Section \ref{SEC_ex_rotation}.

\section{WEAKLY-COUPLED BILINEAR SYSTEMS}\label{SEC_Weakly-coupled}

\subsection{Abstract framework}
We reformulate the problem \eqref{eq:blse} in a more abstract framework.  This
will allow us  to treat examples slightly more general than \eqref{eq:blse},
for instance, the example in~\cite[Section III.A]{weakly-coupled}. In a separable Hilbert
space $H$
endowed with norm $\| \cdot \|$ and Hilbert product $\langle \cdot, \cdot
\rangle$, we consider the evolution problem
\begin{equation}\label{eq:main}
\frac{d \psi}{dt}=(A+ u(t) B)\psi
\end{equation}
where $(A,B)$ satisfies the following assumption.

\begin{assumption}\label{ass:ass}
$(A,B)$ is a pair of linear operators such that
 \begin{enumerate}
 \item $A$ is skew-adjoint and has purely discrete spectrum $(-\mathrm{i}
\lambda_k)_{k \in \mathbf{N}}$, the sequence  $(\lambda_k)_{k \in \mathbf{N}}$
is positive non-decreasing and accumulates at
$+\infty$;\label{ASS_bounded_from_below}
	\item $B:H\rightarrow H$ is skew-adjoint and bounded.\label{ASS_B_bounded}
 \end{enumerate}
\end{assumption}

In the rest of our study, we denote by $(\phi_k)_{k \in
\mathbf{N}}$ an Hilbert basis of $H$ such
that $A\phi_k=-\mathrm{i}  \lambda_k \phi_k$ for every $k$ in $\mathbf{N}$.
We denote by $D(A+uB)$ the domain where
$A+uB$ is skew-adjoint.

Together with Kato-Rellich Theorem, the Assumption~\ref{ass:ass}.\ref{ASS_B_bounded} ensures that, for every
constants $u$ in $\mathbf{R}$, $A+u B$ is essentially
skew-adjoint on $D(A)$ and $\mathrm{i}(A+ u B )$ is bounded from below. Hence, for every initial condition $\psi_0$ in
$H$, 
for every
$u$ piecewise constant, $u:t\mapsto \sum_{j} u_j \chi_{(t_j,t_{j+1})}(t)$, with $0=t_0\leq t_1 \leq \ldots  \leq t_{N+1}$ and $u_0,\ldots,u_N$ in $\mathbf{R}$,
one can define
 the solution $t\mapsto \Upsilon^u_t\psi_0$ of \eqref{eq:main}   by
\begin{multline*}
 \Upsilon^u_t \psi_{0}=e^{(t-t_{j-1})(A + u_{j-1} B)}\circ\\\circ
e^{(t_{j-1}-t_{j-2})(A + u_{j-2} B)}\circ \cdots  \circ e^{t_{0}(A+
u_0  B)} \psi_{0},
\end{multline*}
for $t\in [t_{j-1},t_{j})$.
For a control $u$ in $L^{1}(\mathbf{R})$ we define the solution 
 using the 
 following classical continuity result.
 
\begin{proposition}\label{PRO_continuity_Upsilon}
Let $u$ and $(u_n)_{n\in \mathbf{N}}$ be in  $L^1(\mathbf{R})$. If for every $t$ in $\mathbf{R}$ $\int_0^t u_n(\tau) \mathrm{d} \tau$ converges to  $\int_0^t u(\tau) \mathrm{d} \tau$ as $n$ tends to infinity, then, for every $t$ in $\mathbf{R}$ and every $\psi_0$ in $H$, $(\Upsilon^{u_n}_t\psi_0 )_{n \in \mathbf{N}}$ converges to $\Upsilon^{u}_t\psi_0 $ as $n$ tends to infinity.
\end{proposition}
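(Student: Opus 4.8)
The plan is to argue in two movements: first reduce, using the unitary character of the propagators, to initial data in a convenient dense set, and then exploit that the control enters the dynamics only through its primitive. Throughout, write $U(t)=\int_0^t u(\tau)\,\mathrm{d}\tau$ and $U_n(t)=\int_0^t u_n(\tau)\,\mathrm{d}\tau$, so that the hypothesis reads $U_n(t)\to U(t)$ for every $t$. Since $A+uB$ is skew-adjoint for every constant $u$, each $\Upsilon^{u_n}_t$ is unitary, hence $\|\Upsilon^{u_n}_t\|=1$ uniformly in $n$ and $t$. A standard $3\varepsilon$ argument then reduces the claim to the convergence $\Upsilon^{u_n}_t\psi_0\to\Upsilon^{u}_t\psi_0$ for $\psi_0$ in the dense set of finite linear combinations of the eigenvectors $\phi_k$, the general case following by approximation together with the uniform bounds on $\|\Upsilon^{u_n}_t\|$ and $\|\Upsilon^{u}_t\|$.

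The key algebraic observation is that the singular part of the control can be factored out. Set $\xi_n(t)=e^{-U_n(t)B}\Upsilon^{u_n}_t\psi_0$. Since $B$ commutes with $e^{-U_n(t)B}$, differentiating makes the terms carrying $u_n$ cancel, so that $\xi_n$ solves the \emph{control-free} non-autonomous equation
\begin{equation*}
\frac{\mathrm{d}\xi_n}{\mathrm{d}t}=\widehat{A}_n(t)\,\xi_n,\qquad \widehat{A}_n(t)=e^{-U_n(t)B}\,A\,e^{U_n(t)B},\qquad \xi_n(0)=\psi_0,
\end{equation*}
and likewise $\xi(t)=e^{-U(t)B}\Upsilon^{u}_t\psi_0$ solves the analogous equation with $\widehat{A}(t)=e^{-U(t)B}Ae^{U(t)B}$. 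Because $B$ is bounded and skew-adjoint, $\|e^{xB}-e^{yB}\|\le |x-y|\,\|B\|$, hence $e^{U_n(t)B}\to e^{U(t)B}$ in operator norm for every $t$; since $\|\xi_n(t)\|=\|\psi_0\|$, it will follow that $\Upsilon^{u_n}_t\psi_0\to\Upsilon^{u}_t\psi_0$ as soon as $\xi_n(t)\to\xi(t)$. The gain of this substitution is that the possibly wildly oscillating control has been replaced by the conjugating family $e^{U_n(t)B}$, which depends on $u_n$ only through the convergent primitive $U_n$; in particular no bound on $\|u_n\|_{L^1}$ should be needed.

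It remains to pass from the convergence of generators to the convergence of flows. From $U_n(t)\to U(t)$ and the norm convergence of the conjugating unitaries one obtains, for each fixed $t$, that $\widehat A_n(t)\to\widehat A(t)$ in the norm-resolvent sense, by conjugating the bounded resolvent $(A-z)^{-1}$. The natural route is then a Trotter--Kato stability argument: replace $A$ by its Yosida approximation $A_\lambda$, for which the generators $\widehat A_{n,\lambda}(t)=e^{-U_n(t)B}A_\lambda e^{U_n(t)B}$ are bounded, dissipative, and bounded by $\|A_\lambda\|$ uniformly in $n$ and $t$, so that the associated propagators $\widehat\Upsilon_{n,\lambda}(t,s)$ are contractions. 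For fixed $\lambda$ the comparison of the two bounded-generator flows is exact,
\begin{equation*}
\xi_\lambda(t)-\xi_{n,\lambda}(t)=\int_0^t \widehat\Upsilon_{n,\lambda}(t,s)\bigl(\widehat A_\lambda(s)-\widehat A_{n,\lambda}(s)\bigr)\xi_\lambda(s)\,\mathrm{d}s,
\end{equation*}
where the integrand tends to $0$ pointwise in $s$ and is dominated by the constant $2\|A_\lambda\|\,\|\psi_0\|$; dominated convergence then yields $\xi_{n,\lambda}(t)\to\xi_\lambda(t)$ as $n\to\infty$, with no integrability assumption on the controls.

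The genuinely delicate step, which I expect to be the main obstacle, is to remove the regularization uniformly in $n$, that is, to prove $\sup_n\|\xi_n(t)-\xi_{n,\lambda}(t)\|\to 0$ as $\lambda\to\infty$. Here one runs into the fact that $B$ is not assumed to preserve $D(A)$, so the domains $D(\widehat A_n(t))=e^{-U_n(t)B}D(A)$ move with $n$ and $t$ and are not nested, and the $D(A)$-regularity of the trajectories need not be controlled uniformly in $n$. I would quantify the Yosida error through the quantity $\|(A-A_\lambda)\,\Upsilon^{u_n}_s\psi_0\|$ and attempt to close the estimate on the dense core of finite combinations of eigenvectors, where $\Upsilon^{u_n}_s\psi_0$ stays in $D(A)$; securing the required uniformity in $n$ of this bound is where the bulk of the technical work lies, and is the point at which the argument must be carried out with care rather than by a routine computation.
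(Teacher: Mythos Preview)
The paper gives no proof of this proposition: it is announced as a ``classical continuity result'' and used without argument to \emph{define} $\Upsilon^u_t$ for general $u\in L^1$. So there is nothing in the paper to compare your argument against directly.

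Your approach --- density via unitarity, then the gauge change $\xi_n(t)=e^{-U_n(t)B}\Upsilon^{u_n}_t\psi_0$ which replaces the control by its convergent primitive --- is exactly the standard device in this literature (it underlies the well-posedness results in the framework of \cite{Schrod2} and goes back to Ball--Marsden--Slemrod). The algebraic cancellation you describe is correct, as is the norm convergence $e^{U_n(t)B}\to e^{U(t)B}$, and you are right that after this substitution the dynamics depends on $u_n$ only through $U_n$.

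You are also right that the delicate step is the uniformity in $n$ of the regularization, and right about its source: $e^{U_n(t)B}$ need not preserve $D(A)$, so the domains of $\widehat A_n(t)$ move. One remark that may sharpen your plan: in the only way the paper actually uses the proposition --- to pass from piecewise-constant approximants of a fixed $u\in L^1$ to $u$ itself --- one has for free the uniform bound $\|u_n\|_{L^1}\le\|u\|_{L^1}$, and then the Yosida detour can be avoided entirely. In the interaction picture, $w_n(t)=e^{-tA}\Upsilon^{u_n}_t\psi_0$ solves the \emph{bounded} equation $\dot w_n=u_n(t)\,e^{-tA}Be^{tA}\,w_n$; the difference $w_n-w$ is then controlled, via Gronwall with constant $e^{\|B\|\,\|u_n\|_{L^1}}$, by $\sup_{s\le t}\bigl\|\int_0^s(u_n-u)(\tau)\,e^{-\tau A}B\,\Upsilon^u_\tau\psi_0\,\mathrm{d}\tau\bigr\|$, and this tends to $0$ because pointwise convergence of primitives together with a uniform total-variation bound is precisely weak-$\ast$ convergence of $(u_n-u)\,\mathrm{d}\tau$ to $0$ against the continuous $H$-valued integrand. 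Without a uniform $L^1$ bound this weak-$\ast$ step breaks down (the Gronwall constant blows up too), and one is forced back to your conjugated-$A$ equation; there the difficulty you flag is exactly the crux, and your proposal to close it on finite combinations of eigenvectors is the natural line of attack.
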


\subsection{Energy growth}
From Assumption~\ref{ass:ass}.\ref{ASS_bounded_from_below}, the operator
$\mathrm{i}A$ is self-adjoint with positive eigenvalues. For every $\psi$ in
$D(A)$, $\mathrm{i}A\psi=\sum_{j\in \mathbf{N}} \lambda_j \langle \phi_j, \psi
\rangle \phi_j$. For every $s\geq 0$, we define the linear operator
$|A|^s:=(\mathrm{i}A)^s$ by $|A|^{s}\psi=\sum_{j\in \mathbf{N}} \lambda_j^s
\langle \phi_j, \psi \rangle \phi_j$, for every $\psi$ in $D(|A|^s) = \{\psi \in
H\,:\,   \sum_{j\in \mathbf{N}} \lambda_j^{2s}
|\langle \phi_j, \psi \rangle|^{2}  < +\infty\}$.
We define the $s$-norm by $\|\psi \|_s=\||A|^s \psi\|$
for every $\psi$ in $D(|A|^s)$. The $1/2$-norm plays an important role in
physics; for every $\psi$ in $D(|A|^{1/2})$, the quantity $|\langle A \psi,\psi
\rangle |=\|\psi\|_{1/2}^2$ is the expected value of the energy.

\begin{definition}\label{DEF_weakly_coupled}
Let  $(A,B)$ satisfy
Assumption~\ref{ass:ass}. Then $(A,B)$ is
\emph{weakly-coupled}
if
 there exists a constant $C$ such that, for every $\psi$ in $D(|A|)$, $ |\Re \langle |A|
\psi,B\psi \rangle |\leq C |\langle |A| \psi, \psi \rangle|$.
The coupling constant  $c(A,B)$ of system $(A,B)$ is the quantity
$
\sup_{\psi\in D(|A|)\setminus\{0\}}
{ |\Re
\langle |A|
\psi,B\psi \rangle |}/{|\langle |A| \psi, \psi \rangle|}.
$
\end{definition}

The notion of weakly-coupled systems is closely related to the growth of the
expected value of the energy.

\begin{proposition}\label{PRO_croissance_norme_A} Let  $(A,B)$ satisfy Assumption~\ref{ass:ass} and be weakly-coupled.  Then,
for every $\psi_{0} \in D(|A|^{1/2})$, $K>0$,
 $T\geq 0$, and $u$ in $L^1([0,\infty))$  for which
$\|u\|_{L^1}< K$, one has
$\left\|\Upsilon^{u}_{T}(\psi_{0})\right\|_{1/2} <
e^{c(A,B) K} \| \psi_0 \|_{1/2}.$
\end{proposition}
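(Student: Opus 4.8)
The plan is to control the \emph{energy} $y(t) := \|\Upsilon^u_t\psi_0\|_{1/2}^2 = \langle |A|\Upsilon^u_t\psi_0, \Upsilon^u_t\psi_0\rangle$ directly, by deriving a Gr\"onwall-type differential inequality along the flow. Writing $\psi(t) = \Upsilon^u_t\psi_0$ and assuming for the moment enough regularity to differentiate, one has
\begin{equation*}
\frac{d}{dt}\langle |A|\psi,\psi\rangle = 2\Re\langle |A|\psi, \dot\psi\rangle = 2\Re\langle |A|\psi, A\psi\rangle + 2u(t)\,\Re\langle |A|\psi, B\psi\rangle,
\end{equation*}
using that $|A|$ is self-adjoint and $\dot\psi = (A+u(t)B)\psi$. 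Since $|A| = \mathrm{i}A$ commutes with $A$ and $A$ is skew-adjoint, the free term $\Re\langle |A|\psi, A\psi\rangle$ vanishes (equivalently, each $e^{tA}$ is an isometry for $\|\cdot\|_{1/2}$, as $\|e^{tA}\psi\|_{1/2}^2 = \sum_k \lambda_k|\langle\phi_k,\psi\rangle|^2$ is independent of $t$). The weakly-coupled hypothesis (Definition~\ref{DEF_weakly_coupled}) bounds the remaining term by $2|u(t)|\,c(A,B)\langle|A|\psi,\psi\rangle$, so that $|y'(t)| \le 2c(A,B)|u(t)|\,y(t)$. Gr\"onwall's lemma then yields $y(T) \le y(0)\exp(2c(A,B)\int_0^T|u|)$, and taking square roots gives $\|\Upsilon^u_T\psi_0\|_{1/2} \le \|\psi_0\|_{1/2}\exp(c(A,B)\int_0^T|u|)$.

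The delicate point, and the step I expect to be the main obstacle, is justifying the differentiation: the expression $\langle|A|\psi,B\psi\rangle$ requires $\psi(t)\in D(|A|)$, whereas only $\psi_0\in D(|A|^{1/2})$ is assumed. I would first establish the estimate for piecewise constant controls $u = \sum_j u_j\chi_{(t_j,t_{j+1})}$ and for $\psi_0\in D(A)=D(|A|)$. On each subinterval $\Upsilon^u_t$ is the group generated by $A+u_jB$; since $B$ is bounded (Assumption~\ref{ass:ass}), bounded-perturbation theory guarantees that this group shares the domain $D(A)$ of $e^{tA}$ and leaves $D(A)$ invariant, and that $t\mapsto\psi(t)$ is $C^1$ with values in $D(A)$. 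This legitimizes all the manipulations above and, in particular, makes $t\mapsto y(t)$ continuously differentiable on each subinterval; as $y$ is continuous across the switching times, the Gr\"onwall estimate holds on all of $[0,T]$.

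It remains to remove the two auxiliary restrictions by density, exploiting that $\psi\mapsto\|\psi\|_{1/2}^2 = \sum_k\lambda_k|\langle\phi_k,\psi\rangle|^2$ is lower semicontinuous for $H$-convergence, being the supremum of the continuous partial sums. First, keeping $u$ piecewise constant, I approximate a general $\psi_0\in D(|A|^{1/2})$ by its spectral truncations $\psi_0^n = \sum_{k\le n}\langle\phi_k,\psi_0\rangle\phi_k\in D(A)$, which satisfy $\|\psi_0^n\|_{1/2}\to\|\psi_0\|_{1/2}$; since $\Upsilon^u_T$ is continuous on $H$, one has $\Upsilon^u_T\psi_0^n\to\Upsilon^u_T\psi_0$ in $H$, and lower semicontinuity transfers the bound. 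Second, I approximate a general $u\in L^1$ by piecewise constant controls $u_n$ with $\int_0^t u_n\to\int_0^t u$ for every $t$ and $\int_0^\infty|u_n|\le\|u\|_{L^1}$ (e.g.\ by averaging $u$ on a refining partition); Proposition~\ref{PRO_continuity_Upsilon} then gives $\Upsilon^{u_n}_T\psi_0\to\Upsilon^u_T\psi_0$ in $H$, and lower semicontinuity again passes the estimate to the limit. Finally, since $\int_0^T|u|\le\|u\|_{L^1}<K$, the exponential factor is strictly smaller than $e^{c(A,B)K}$, which yields the strict inequality claimed.
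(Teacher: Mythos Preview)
Your core strategy---differentiate $y(t)=\langle |A|\psi(t),\psi(t)\rangle$, kill the free-drift term by commutation, bound the interaction term via the weak-coupling constant, and integrate with Gr\"onwall---is exactly the paper's. Where you diverge is in the handling of the regularity issues. The paper's in-text proof restricts to $\psi_0\in D(A^2)$, piecewise constant $u$, \emph{and} imposes the extra domain hypothesis $D(|A|^2)=D(|A+uB|^2)$, explicitly deferring the removal of that hypothesis to regularization techniques in a companion reference. You instead work with $\psi_0\in D(A)$ and no extra domain assumption, relying on Kato--Rellich to identify $D(A+uB)=D(A)$ and on invariance of the generator's domain under its own group; you then close the gap to $\psi_0\in D(|A|^{1/2})$ and general $u\in L^1$ by spectral truncation, Proposition~\ref{PRO_continuity_Upsilon}, and lower semicontinuity of $\|\cdot\|_{1/2}$. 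This is a genuine (and self-contained) improvement over the paper's presentation, which does not carry out the density step at all.

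One small imprecision worth tightening: for $\psi_0\in D(A)$ the trajectory $t\mapsto e^{t(A+u_jB)}\psi_0$ is $C^1$ with values in $H$ and $C^0$ with values in $D(A)$ (since $(A+u_jB)\psi(t)=e^{t(A+u_jB)}(A+u_jB)\psi_0$ is $H$-continuous and $B$ is bounded), but $C^1$ \emph{into} $D(A)$ would require $\psi_0\in D(A^2)$. Fortunately your differentiation of $y$ only needs the weaker regularity: writing $y(t+h)-y(t)=\langle \psi(t+h)-\psi(t),|A|\psi(t+h)\rangle+\langle |A|\psi(t),\psi(t+h)-\psi(t)\rangle$ and using $H$-differentiability of $\psi$ together with $H$-continuity of $|A|\psi$ gives $y'(t)=2\Re\langle |A|\psi,\dot\psi\rangle$ as claimed. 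With that adjustment the argument is complete.
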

\begin{proof}
We present here a simple proof  in the special case where $\psi_0$ belongs to $D(A^{2})$, $u$ is piecewise constant, and $D(|A|^{2})=D(|A+ u B|^{2})$ for every $u$ in $\mathbf{R}$. This last equality holds for the most common physical examples. A general proof of Proposition \ref{PRO_croissance_norme_A}, involving rather technical regularization techniques to relax this extra assumption is presented in \cite[Appendix]{weakly-coupled}.

First note that, for every $t\geq 0$, the set $D(|A|^{2})=D(|A+u B|^{2})$
is invariant for the unitary map $\psi\mapsto e^{t(A+u B)} \psi$. Moreover, for every $\psi$ in
$D(|A+ u B|^2)$, the mapping $t\mapsto |A+\sum u B |^2 e^{t(A+ u B)} \psi=
e^{t(A+ u B)} |A+ u B | \psi$ is $C^1$ from $[0,+\infty)$ to $H$, with derivative
$t\mapsto (A+ u B) e^{t(A+ u B )} |A+ u B | \psi = |A+ u B | (A+ u B ) e^{t(A+ u B )} \psi$. In other words, the mapping $t\mapsto e^{t(A+ u B )} \psi$ is $C^1$ from $[0,+\infty)$ to $D(|A+ u B |)=D(|A|)$.

Fix $u:[0,+\infty)\rightarrow \mathbf{R}$ piecewise constant, $\psi_0$ in $D(|A|^{k+1})$ and
consider the real mapping $f:t \mapsto \langle |A|^k \Upsilon^u_t \psi_0,\Upsilon^u_t \psi_0 \rangle $.
Since $\psi_{0}$ belongs to $D(|A+ u(t) B|^{k+1})$, then  $f$ is
absolutely continuous and for the argument above is piecewise $C^{1}$. For almost every $t$,
\begin{eqnarray*}
\frac{d}{dt} f(t) & = &  \frac{d}{dt}  \langle |A|^k \Upsilon^u_t\psi_0 , \Upsilon^u_t\psi_0 \rangle \\
&= &2 \Re \langle |A| \Upsilon^u_t\psi_0 ,  (A+ u(t) B) \Upsilon^u_t\psi_0 \rangle\\
&= &2 u (t) \Re \langle |A| \Upsilon^u_t\psi_0 ,  B \Upsilon^u_t\psi_0 \rangle.
\end{eqnarray*}
Since $(A,B)$ is weakly-coupled, one has
\begin{eqnarray*}
|f'(t)| &\leq  &2   |u(t)| |\langle |A| \Upsilon^u_t\psi_0, B \Upsilon^u_t\psi_0
\rangle|\\
&\leq  &2 c(A,B) |u(t)| f(t).
\end{eqnarray*}
 From Gronwall's lemma, we get
 $$\langle |A| \psi(t), \psi(t) \rangle  \leq
e^{2 c(A,B) \int_{0}^t \!|u|(\tau) \mathrm{d}\tau} \|  \psi_0
\|_{1/2}^2$$ for every $\psi_0$ in $D(|A|^{2})$.
\end{proof}

\subsection{Good Galerkin approximation}
For every $N$ in $\mathbf{N}$, we define the orthogonal projection
$$
 \pi_N:\psi \in H \mapsto \sum_{j\leq N} \langle \phi_j,\psi\rangle
\phi_j \in H.
$$
\begin{proposition}\label{PRO_troncature}
Let $(A,B)$ satisfy
Assumption~\ref{ass:ass}, and be weakly-coupled. Then,
for every  $n\in \mathbf{N}$, $N \in \mathbf{N}$,
$(\psi_j)_{1\leq j \leq n}$ in $D(|A|^{1/2})^n$,
and for every $L^{1}$ function $u$,
\begin{equation}\label{eq:feps2}
\|(\mathrm{Id} - \pi_{N})
\Upsilon^{u}_{t}(\psi_{j})\|<  \frac{e^{c(A,B) \|u\|_{L^1} }\|\psi_j\|_{1/2}}{\sqrt{\lambda_{N+1}}}.
\end{equation}
for every $t \geq 0$ and $j=1,\ldots,n$.
\end{proposition}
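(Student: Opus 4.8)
The plan is to combine a purely spectral interpolation estimate with the energy bound already established in Proposition~\ref{PRO_croissance_norme_A}. The statement asserts that the component of the propagated state lying beyond the first $N$ eigenmodes of $A$ is controlled by the $1/2$-norm (the square root of the expected energy) divided by $\sqrt{\lambda_{N+1}}$, while Proposition~\ref{PRO_croissance_norme_A} already controls the growth of that $1/2$-norm along the flow. Accordingly I would split the argument into two steps: a static estimate valid for an arbitrary $\psi$ in $D(|A|^{1/2})$, followed by the substitution $\psi=\Upsilon^u_t(\psi_j)$.

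For the static step I would expand $\psi$ in the Hilbert basis $(\phi_k)_{k\in\mathbf{N}}$ of eigenvectors of $A$. Since $(\mathrm{Id}-\pi_N)\psi=\sum_{k>N}\langle\phi_k,\psi\rangle\phi_k$, Parseval's identity gives $\|(\mathrm{Id}-\pi_N)\psi\|^2=\sum_{k>N}|\langle\phi_k,\psi\rangle|^2$. Because the sequence $(\lambda_k)$ is non-decreasing, $\lambda_k\geq\lambda_{N+1}$ for every $k>N$, so that $\lambda_{N+1}\sum_{k>N}|\langle\phi_k,\psi\rangle|^2\leq\sum_{k>N}\lambda_k|\langle\phi_k,\psi\rangle|^2\leq\sum_{k\in\mathbf{N}}\lambda_k|\langle\phi_k,\psi\rangle|^2=\langle|A|\psi,\psi\rangle=\|\psi\|_{1/2}^2$. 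Dividing by $\lambda_{N+1}>0$ and taking square roots yields the elementary interpolation inequality $\|(\mathrm{Id}-\pi_N)\psi\|\leq\|\psi\|_{1/2}/\sqrt{\lambda_{N+1}}$.

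For the second step I would apply this inequality to $\psi=\Upsilon^u_t(\psi_j)$. Since $(A,B)$ is weakly-coupled and $\psi_j$ lies in $D(|A|^{1/2})$, Proposition~\ref{PRO_croissance_norme_A} ensures both that $\Upsilon^u_t(\psi_j)$ remains in $D(|A|^{1/2})$, so that $\|\cdot\|_{1/2}$ is meaningful, and that $\|\Upsilon^u_t(\psi_j)\|_{1/2}<e^{c(A,B)K}\|\psi_j\|_{1/2}$ for every $K>\|u\|_{L^1}$. Chaining the two steps and letting $K$ decrease to $\|u\|_{L^1}$ produces the stated bound with the factor $e^{c(A,B)\|u\|_{L^1}}$, uniformly in $t\geq0$ and in $j=1,\dots,n$.

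I do not expect a genuine obstacle here: all the analytic difficulty, namely the invariance of $D(|A|^{1/2})$ under the flow and the Gronwall control of the energy, has already been absorbed into Proposition~\ref{PRO_croissance_norme_A}, and what remains is the one-line spectral estimate above. The only points requiring mild care are bookkeeping: verifying that $\Upsilon^u_t(\psi_j)$ indeed belongs to $D(|A|^{1/2})$ before invoking the $1/2$-norm, and passing from the strict bound of Proposition~\ref{PRO_croissance_norme_A}, valid for each $K>\|u\|_{L^1}$, to the exponent $\|u\|_{L^1}$ itself through the limit $K\downarrow\|u\|_{L^1}$.
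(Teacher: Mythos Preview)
Your proposal is correct and follows essentially the same route as the paper: the identical spectral tail estimate $\|(\mathrm{Id}-\pi_N)\psi\|^2\leq\lambda_{N+1}^{-1}\|\psi\|_{1/2}^2$, followed by the energy bound from Proposition~\ref{PRO_croissance_norme_A}. The only cosmetic difference is that the paper closes by invoking Proposition~\ref{PRO_continuity_Upsilon} (to pass from piecewise constant controls, for which Proposition~\ref{PRO_croissance_norme_A} was actually proved in the text, to general $L^1$ controls via approximation in $H$), whereas you take the statement of Proposition~\ref{PRO_croissance_norme_A} at face value and handle the exponent by letting $K\downarrow\|u\|_{L^1}$; note that this limiting step yields $\leq$ rather than the strict $<$ asserted in the statement.
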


\begin{proof}
Fix $j \in \{1,\ldots,n\}$. For every $N > 1$, one has
\begin{eqnarray*}\label{eq:estimates}
\left\|(\mathrm{Id} - \pi_{N})
\Upsilon^{u}_{t}(\psi_{j})\right\|^2&=&
\sum_{n = N+1}^{\infty}  | \langle  \phi_{n},
\Upsilon^{u}_{t}(\psi_{j}) \rangle|^{2}\\
 &\leq&
\lambda_{N+1}^{-1}\left\|\Upsilon^{u}_{t}(\psi_{j})\right\|_{1/2}^2.
\end{eqnarray*}
By Proposition~\ref{PRO_croissance_norme_A}, for
every $t>0$,
$\left\|\Upsilon^{u}_{t}(\psi_{j})\right\|_{1/2}^2 \leq
e^{2c_{k}(A,B) \|u\|_{L^1}}\| \psi_j
\|_{1/2}^2$.
The conclusion then follows by Proposition~\ref{PRO_continuity_Upsilon}.
\end{proof}

\begin{remark}
 Since $B$ is bounded, then $\left\|B(\mathrm{Id} - \pi_{N})
\Upsilon^{u}_{t}(\psi_{j})\right\|$
tends to $0$ as $N$ tends to infinity  uniformly with
respect to $u$ of $L^{1}$-norm smaller than a given constant.
\end{remark}

\begin{definition}\label{DEF_Galerkin_approx}
Let $N \in \mathbf{N}$.  The \emph{Galerkin approximation}  of \eqref{eq:main}
of order $N$ is the system in $H$
\begin{equation}\label{eq:sigma}
\dot x = (A^{(N)} + u(t) B^{(N)}) x \tag{$\Sigma_{N}$}
\end{equation}
where $A^{(N)}=\pi_N A \pi_N$ and $B^{(N)}=\pi_N B_{l} \pi_N$ are the
\emph{compressions} of $A$ and $B$ (respectively).
\end{definition}

{We denote by $X^{u}_{(N)}(t,s)$ the propagator of \eqref{eq:sigma}
for a $L^{1}$ function $u$.}

\begin{remark}
The operators $A^{(N)}$ and $B^{(N)}$ are defined
on the \emph{infinite} dimensional space $H$. However, they have finite rank and
the dynamics of $(\Sigma_N)$ leaves invariant the $N$-dimensional space
$\mathcal{L}_{N} = \mathrm{span}_{1\leq l \leq N} \{\phi_l\}$. Thus,
$(\Sigma_N)$ can be seen as a finite dimensional bilinear system in
$\mathcal{L}_{N}.$
\end{remark}

\begin{proposition}[Good Galerkin Approximation]
	\label{PRO_Good_Galerkin_approximtion}
Let $(A,B)$ satisfy Assumption~\ref{ass:ass} and be weakly-coupled. Then
for every $\varepsilon > 0 $, $K\geq 0$, $n\in \mathbf{N}$, and
$(\psi_j)_{1\leq j \leq n}$ in $D(|A|^{1/2})^n$
there exists $N \in \mathbf{N}$
such that
for every $L^{1}$ function $u$
$$
\|u\|_{L^{1}} < K \implies \| \Upsilon^{u}_{t}(\psi_{j}) -
X^{u}_{(N)}(t,0)\pi_{N} \psi_{j}\| < \varepsilon,
$$
for every $t \geq 0$ and $j=1,\ldots,n$.
\end{proposition}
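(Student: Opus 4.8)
The plan is to compare the full trajectory $\psi(t)=\Upsilon^u_t(\psi_j)$ with the Galerkin trajectory $x(t)=X^u_{(N)}(t,0)\pi_N\psi_j$ by splitting the error along the orthogonal decomposition $H=\mathcal{L}_N\oplus\mathcal{L}_N^\perp$. Writing $\psi(t)-x(t)=(\mathrm{Id}-\pi_N)\psi(t)+(\pi_N\psi(t)-x(t))$, I would bound the two summands separately, in each case obtaining an estimate uniform in $t\ge 0$ and in every $u$ with $\|u\|_{L^1}<K$. The first, ``tail'', term is handled directly by the weakly-coupled estimate of Proposition \ref{PRO_troncature}: it is bounded by $e^{c(A,B)K}\|\psi_j\|_{1/2}/\sqrt{\lambda_{N+1}}$, which tends to $0$ as $N\to\infty$ since $\lambda_{N+1}\to+\infty$, independently of $t$ and $u$. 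So this term is smaller than $\varepsilon/2$ once $N$ is large enough.

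The crux is the second, ``in-space'', term $\pi_N\psi(t)-x(t)$. Set $y(t)=\pi_N\psi(t)$. Using that $A$ is diagonal in the basis $(\phi_k)$ and hence commutes with $\pi_N$, so that $\pi_N A\psi=A^{(N)}y$, and decomposing $\pi_N B\psi=B^{(N)}y+\pi_N B(\mathrm{Id}-\pi_N)\psi$, I would show that $y$ solves the forced Galerkin equation $\dot y=(A^{(N)}+u(t)B^{(N)})y+u(t)\,\pi_N B(\mathrm{Id}-\pi_N)\psi(t)$ with $y(0)=\pi_N\psi_j=x(0)$. Since $x$ solves the same equation with no source and the same initial datum, the difference $e=y-x$ satisfies this linear equation with zero initial datum and forcing $r(t)=u(t)\,\pi_N B(\mathrm{Id}-\pi_N)\psi(t)$. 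Duhamel's formula then gives $e(t)=\int_0^t X^u_{(N)}(t,s)r(s)\,\mathrm{d}s$, and since $A^{(N)}+u B^{(N)}$ is skew-adjoint the propagator $X^u_{(N)}(t,s)$ is unitary, so that $\|e(t)\|\le\int_0^t|u(s)|\,\|B(\mathrm{Id}-\pi_N)\psi(s)\|\,\mathrm{d}s\le\|u\|_{L^1}\sup_{s\ge 0}\|B(\mathrm{Id}-\pi_N)\psi(s)\|$.

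Here the Remark following Proposition \ref{PRO_troncature} closes the estimate: because $B$ is bounded and the tail bound is uniform in $t$ and $u$, the quantity $\sup_{s\ge 0}\|B(\mathrm{Id}-\pi_N)\psi(s)\|$ tends to $0$ as $N\to\infty$ uniformly over $\|u\|_{L^1}<K$. Hence $\|e(t)\|\le K\sup_{s\ge 0}\|B(\mathrm{Id}-\pi_N)\psi(s)\|$ is also below $\varepsilon/2$ for $N$ large. Taking $N$ above both thresholds and combining the two estimates yields $\|\psi(t)-x(t)\|<\varepsilon$ for all $t\ge 0$ and all admissible $u$; since the final bound depends only on $N$, the same $N$ (a maximum of finitely many thresholds) works simultaneously for all indices $j=1,\dots,n$.

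I expect two main obstacles. First, making the differentiation of $t\mapsto\pi_N\psi(t)$ and the Duhamel identity rigorous for a general $u\in L^1$: I would carry out the computation for piecewise constant controls, where the propagators are explicit and differentiation is classical, and then pass to the limit via the continuity statement of Proposition \ref{PRO_continuity_Upsilon} applied to both $\Upsilon^u$ and $X^u_{(N)}$, exploiting that the target $\varepsilon$-bound does not depend on $u$. Second, and more conceptually, securing uniformity in time $t\ge 0$: this is precisely where the weakly-coupled hypothesis is indispensable, since it is Proposition \ref{PRO_croissance_norme_A} that keeps the energy, and therefore both the tail and the forcing term, bounded uniformly in time rather than letting them grow along the orbit.
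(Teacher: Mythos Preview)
Your proposal is correct and follows essentially the same route as the paper: split the error into the tail $(\mathrm{Id}-\pi_N)\Upsilon^u_t(\psi_j)$ and the in-space part $\pi_N\Upsilon^u_t(\psi_j)-X^u_{(N)}(t,0)\pi_N\psi_j$, derive the forced Galerkin equation for $\pi_N\Upsilon^u_t(\psi_j)$, apply Duhamel together with the unitarity of $X^u_{(N)}$, and control both pieces via Proposition~\ref{PRO_troncature}. The paper is slightly more explicit in that it records the quantitative bound $\lambda_{N+1}^{-1/2}(1+K\|B\|)e^{c(A,B)K}\|\psi_j\|_{1/2}$, while you phrase the conclusion qualitatively via the Remark; your care about justifying the differentiation for general $u\in L^1$ by approximation through piecewise constants is a point the paper leaves implicit.
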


\begin{proof}
Fix $j$ in $\{1,\ldots,n\}$ and consider
the map $t\mapsto
\pi_{N} \Upsilon^{u}_{t}(\psi_{j})$ that is absolutely continuous and satisfies,
for almost every $t \geq 0$,
\begin{multline}
\frac{d}{dt} \pi_{N} \Upsilon^{u}_{t}(\psi_{j}) = (A^{(N)} +  u(t)B^{(N)}) \pi_{N} \Upsilon^{u}_{t}(\psi_{j}) \\+ u(t) \pi_{N} B (\mathrm{Id} - \pi_{N}) \Upsilon^{u}_{t}(\psi_{j}).
\end{multline}
Hence, by variation of constants, for every $t \geq 0$,
\begin{multline}\label{EQ_preuve_good_Galerkin}
\pi_{N} \Upsilon^{u}_{t}(\psi_{j})= X^{u}_{(N)}(t,0)  \pi_{N}\psi_{j} \\+
 \int_{0}^{t} \!\!\!
X^{u}_{(N)}(t,s) \pi_{N} B(\mathrm{Id} - \pi_{N}) \Upsilon^{u}_{s}(\psi_{j})
u(\tau)  \mathrm{d}\tau.
\end{multline}
By Proposition~\ref{PRO_troncature}, the norm of $t \mapsto B(\mathrm{Id} -
\pi_{N}) \Upsilon^{u}_{t}(\psi_{j})$ is
less than
$\|B\|\lambda_{N+1}^{-1/2}e^{c(A,B)K}\|\psi_{j}\|_
{1/2}$. Since $X^{u}_{(N)}(t,s) $ is unitary,
$
\|\pi_{N} \Upsilon^{u}_{t}(\psi_{j}) -X^{u}_{(N)}(t,0) \pi_{N}\psi_{j}\| <
\|u\|_{L^1}
\|B\| \lambda_{N+1}^{-1/2}e^{c(A,B)K}\|\psi_{j}\|_{k/2}.
$
Then
\begin{eqnarray*}
\lefteqn{\|\Upsilon^u_t(\psi_j) - X^u_{(N)}(t,0)\pi_{N} \psi_{j}\|
 }\\
 &\leq & \|(\mathrm{Id} - \pi_{N}) \Upsilon^{u}_{t}(\psi_{j})\|\\
&&+ \|\pi_{N} \Upsilon^{u}_{t}(\psi_{j}) - X^{u}_{(N)}(t,0)  \pi_{N} \psi_{j}
\| \nonumber\\
& \leq& \lambda_{N+1}^{-1/2} (1+K \|B\|) e^{c(A,B)K}\|\psi_{j}\|_{1/2}. \label{eq:qwer}
\end{eqnarray*}
This completes the proof  since $\lambda_N$ tends to infinity as $N$
goes to infinity. \end{proof}


\section{Control of finite dimensional  conservative bilinear systems}\label{SEC_finite_dimension}

\subsection{Averaging results}\label{SEC_averaging}
In this Section, we focus on the control system $(\Sigma_N)$ of Definition \ref{DEF_Galerkin_approx}.
The matrix $A^{(N)}$ is diagonal with eigenvalues $(-\mathrm{i} \lambda_j)_{1 \leq j \leq N}$ with $\lambda_1 \leq \lambda_2 \leq \ldots \leq \lambda_N$.  We denote with $(b_{jk})_{1\leq j,k\leq N}$ the entries of $B^{(N)}$.

\begin{thm}\label{PRO_main_result}
 Let $j,k$ be two integers such that $1\leq j<k\leq N$. Assume that $b_{jk}\neq 0$ and that for every $l,m\leq N$, $|\lambda_l-\lambda_m| \in \mathbf{N} |\lambda_j-\lambda_k|$ implies $\{j,k\}=\{l,m\}$ or $b_{lm}=0$. Define $T=\frac{2\pi}{\lambda_k-\lambda_j}$. Then,  for every $u^{\ast}$ in $L^{1}(\mathbf{R})$ satisfying
 $$
 \int_0^T \!\!\! u^{\ast}(\tau) e^{\mathrm{i} (\lambda_k-\lambda_j) \tau} \mathrm{d}\tau \neq 0
 \mbox{ and}
 \int_0^T \!\!\!u^{\ast}(\tau) e^{\mathrm{i} (\lambda_l-\lambda_m) \tau} \mathrm{d}\tau = 0,
 $$ 
 for every $l,m$ such that $|\lambda_j-\lambda_k|\in  |\lambda_l-\lambda_m| \mathbf{N}$ and $b_{lm}\neq 0$, one has
 \begin{eqnarray*}
\lefteqn{1- |\langle \phi_k, X^{\frac{u^{\ast}}{n}}({nT^{\ast}},0)\phi_j \rangle | }
 \\
  &\leq&  \frac{C}{n} \frac{\pi} {2  |b_{jk}|} \frac{\left (\int_0^T\!\! |u^{\ast}(\tau)|\mathrm{d}\tau \right )^2 }{ \left |\int_0^T u^{\ast}(\tau)e^{\mathrm{i}(\lambda_j-\lambda_k) \tau} \mathrm{d}\tau \right |}
 \end{eqnarray*}
 with
 $$
 C=\frac{1+\|A^{(N)}\|+\|B^{(N)}\|}{\inf_{\frac{\lambda_l-\lambda_m}{\lambda_j-\lambda_k}\notin \mathbf{Z}} \left |\sin \left ( 2 \pi \frac{\lambda_l-\lambda_m}{\lambda_j-\lambda_k} \right ) \right |}
 $$
  and
 $$T^{\ast}=\frac{\pi  T}{2 |b_{jk}|  \left |\int_0^T \!\! u^{\ast}(\tau)e^{\mathrm{i}(\lambda_{j}-\lambda_{k}) \tau}  \mathrm{d}\tau \right |}.$$
\end{thm}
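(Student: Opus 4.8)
The plan is to pass to the interaction picture, rescale time by a factor $n$, and run first--order averaging: the two non--resonance hypotheses will force the averaged equation to be an exact rotation in the plane $\mathrm{span}\{\phi_j,\phi_k\}$, and the explicit constant will come from estimating the residual oscillatory integrals over the long horizon $[0,nT^{\ast}]$. First I would set $y(t)=e^{-tA^{(N)}}X^{u^{\ast}/n}(t,0)\phi_j$. Since $A^{(N)}$ is diagonal and skew--Hermitian, $e^{tA^{(N)}}\phi_l=e^{-\mathrm i\lambda_l t}\phi_l$, so $y$ solves $\dot y=\tfrac{1}{n} u^{\ast}(t)\tilde B(t)y$ with $\tilde B(t)=e^{-tA^{(N)}}B^{(N)}e^{tA^{(N)}}$ and $\tilde B(t)_{lm}=b_{lm}e^{\mathrm i(\lambda_l-\lambda_m)t}$; unitarity of $e^{tA^{(N)}}$ gives $|\langle\phi_k,X^{u^{\ast}/n}(nT^{\ast},0)\phi_j\rangle|=|\langle\phi_k,y(nT^{\ast})\rangle|$. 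After the rescaling $s=t/n$ the equation becomes $y'(s)=u^{\ast}(ns)\tilde B(ns)y(s)$ on $s\in[0,T^{\ast}]$, a norm--preserving system whose generator oscillates on the fast scale $T/n$. Using the $T$--periodicity of $u^{\ast}$ (understood to be extended $T$--periodically) and $T=2\pi/(\lambda_k-\lambda_j)$, its long--time average $\bar M=\lim_{S\to\infty}\tfrac{1}{S}\int_0^S u^{\ast}(\tau)\tilde B(\tau)\,\mathrm d\tau$ has entries $\bar M_{lm}=\tfrac{b_{lm}}{T}\int_0^T u^{\ast}(\tau)e^{\mathrm i(\lambda_l-\lambda_m)\tau}\,\mathrm d\tau$ when $\lambda_l-\lambda_m\in(\lambda_k-\lambda_j)\mathbf Z$ and $\bar M_{lm}=0$ otherwise.

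Next I would read off the structure of $\bar M$ from the hypotheses. The structural non--resonance assumption kills every resonant off--diagonal entry other than $(j,k)$ and $(k,j)$, while forcing the diagonal of $B^{(N)}$ (hence of $\bar M$) to vanish (apply it with $l=m$, so that $|\lambda_l-\lambda_m|=0\in\mathbf N|\lambda_j-\lambda_k|$); the assumption $\int_0^T u^{\ast}e^{\mathrm i(\lambda_k-\lambda_j)\tau}\neq0$ keeps $\bar M_{jk}$ nonzero. Thus $\bar M$ reduces on $\mathrm{span}\{\phi_j,\phi_k\}$ to the skew--Hermitian block with off--diagonal entry $a=\bar M_{jk}$, and a direct computation gives $|\langle\phi_k,e^{s\bar M}\phi_j\rangle|=|\sin(|a|\,s)|$. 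Since $|a|=\tfrac{|b_{jk}|}{T}\bigl|\int_0^T u^{\ast}e^{\mathrm i(\lambda_j-\lambda_k)\tau}\bigr|$, the definition of $T^{\ast}$ is precisely the value making $|a|\,T^{\ast}=\pi/2$, so the averaged flow realizes the exact transfer $|\langle\phi_k,e^{T^{\ast}\bar M}\phi_j\rangle|=1$.

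It remains to estimate the averaging error quantitatively. With $z(s)=e^{s\bar M}\phi_j$ one has $1-|\langle\phi_k,y(T^{\ast})\rangle|\le\|y(T^{\ast})-z(T^{\ast})\|$, and Duhamel's formula gives $y-z=\int_0^{\cdot}\Phi(\cdot,\sigma)(u^{\ast}(n\sigma)\tilde B(n\sigma)-\bar M)z(\sigma)\,\mathrm d\sigma$ with $\Phi$ unitary. Integrating by parts against the primitive $G(\sigma)=\int_0^{\sigma}(u^{\ast}(n\rho)\tilde B(n\rho)-\bar M)\,\mathrm d\rho$ and using $\|z\|\equiv1$ together with $z'=\bar Mz$ reduces everything to a uniform bound on $\|G\|$, that is, on the integrals $\int_0^{t}u^{\ast}(\tau)e^{\mathrm i(\lambda_l-\lambda_m)\tau}\,\mathrm d\tau$ for $t\le nT^{\ast}$. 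Splitting $[0,t]$ into periods, $T$--periodicity turns each integral into a geometric sum whose modulus is controlled by $\int_0^T|u^{\ast}|$ divided by $\bigl|\sin(2\pi\tfrac{\lambda_l-\lambda_m}{\lambda_j-\lambda_k})\bigr|$ for the genuinely non--resonant frequencies, which is the origin of the denominator in $C$; the second hypothesis forces the one--period integral to vanish exactly for the sub--harmonic frequencies dividing $(\lambda_j-\lambda_k)$ with $b_{lm}\neq0$, so the dangerous small--divisor terms disappear and only one--period remainders of size $O(\int_0^T|u^{\ast}|)$ survive. Collecting these bounds, together with $\|\bar M\|=\pi/(2T^{\ast})$, the identity $\int_0^{T^{\ast}}|u^{\ast}(n\sigma)|\,\mathrm d\sigma=\tfrac{T^{\ast}}{T}\int_0^T|u^{\ast}|+O(1/n)$, and $\tfrac{T^{\ast}}{T}=\tfrac{\pi}{2|b_{jk}|}\bigl|\int_0^T u^{\ast}e^{\mathrm i(\lambda_j-\lambda_k)\tau}\bigr|^{-1}$, yields the stated estimate, the norms $\|A^{(N)}\|$ and $\|B^{(N)}\|$ entering through the size of $\tilde B$ and of its interaction with $\bar M$.

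The hard part will be this last step: controlling the non--resonant oscillatory integrals uniformly over the growing horizon $[0,nT^{\ast}]$ with an explicit constant, while simultaneously bookkeeping which frequencies $\lambda_l-\lambda_m$ are resonant, sub--harmonic, or generic, tracking the small--divisor factors $|\sin(\cdot)|$, and absorbing the one--period remainders coming from the fact that $nT^{\ast}$ need not be an integer multiple of $T$. Showing that all of these contributions are genuinely $O(1/n)$ and collapse into the single constant $C$ is the crux of the argument.
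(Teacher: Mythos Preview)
Your proposal is correct and is precisely the classical first--order averaging argument that the paper itself invokes: pass to the interaction picture, average the quasi--periodic generator, identify the limit as a pure rotation in $\mathrm{span}\{\phi_j,\phi_k\}$, and bound the oscillatory remainder by the geometric--sum/small--divisor estimate. The paper does not actually give a self--contained proof of this theorem; it only refers to standard averaging theory (Sanders) for the convergence and to Lemma~8 of the companion paper \cite{periodic} for the explicit constant, so your write--up fills in exactly what the paper leaves to those references.
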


Theorem \ref{PRO_main_result} above states that $|\langle \phi_k, X^{\frac{u^{\ast}}{n}}({nT^{\ast}},0)\phi_j \rangle |$ tends to one as $n$ tends to infinity.
This convergence result may be obtained using classical averaging theory (see for instance \cite{Sanders}). Another  proof based on the particular algebraic structure of the system is given in \cite{periodic}. The estimates given here are a consequence of \cite[Lemma 8]{periodic}.

\subsection{Efficiency of the transfer}\label{SEC_Efficiency}

Using the notations of the last paragraph, for every periodic function $u^{\ast}$ with period $T=\frac{2\pi}{|\lambda_j-\lambda_k|}$, we define the efficiency of $u^{\ast}$ with respect to the transition $(j,k)$ as the real quantity:
\begin{eqnarray*}
E^{(j,k)}(u^{\ast})&=&
\frac{\left |\int_0^{T}\! u^{\ast}(\tau) e^{\mathrm{i}(\lambda_j-\lambda_k) \tau} \mathrm{d}{\tau} \right |}{\int_0^{T}\! |u^{\ast}(\tau) | \mathrm{d}{\tau} } .
\end{eqnarray*}
For every $u$, $0\leq E^{(j,k)}(u) \leq 1$.
For every $\{j, k\}$, $\sup_u  E^{(j,k)}(u)=1$ (consider a sequence of  controls that tends to a periodic sum of Dirac functions). An example of $u^{\ast}$ with zero efficiency is presented in Section \ref{SEC_ex_rotation}.

An intuitive explanation of the efficiency could be the following: asymptotically, the $L^1$-norm of the control  needed to induce the transition between levels $j$ and $k$ using periodic controls of the form $u_n$ is equal to
$\pi/(2|b_{jk}| E^{(j,k)}(u^{\ast}))$.

\subsection{Design of control laws}\label{SEC_Design}

The system \eqref{eq:main} being given, the design of an effective control law fulfilling the hypotheses of Theorem~\ref{PRO_main_result}  is an important
 practical issue.
 To generate  a transfer from level $j$ to level $k$, one should chose a control $u$ such that $E^{(j,k)}(u)$ be as large as possible and $E^{(l,m)}(u)$ be zero (or arbitrarily close to zero) for every $l,m$ such that $\lambda_{l}-\lambda_{m} \in (\lambda_j-\lambda_k) \mathbf{Z}$.

 A natural choice for the control law can be
 $
 t\mapsto \cos \left ( {|\lambda_j-\lambda_k|} t \right ).
 $
 The efficiency with respect to transition $(j,k)$ is equal to $\pi/4\approx 0.79$. All other efficiencies are zeros.

 This natural cosine control law may however not match every desirable properties of the control law. For instance, if for some reason the control law has to be positive, one could chose $
 t\mapsto 1+ \cos \left ( {|\lambda_j-\lambda_k|} t \right ).
 $
 The efficiency with respect to transition $(j,k)$ is equal to $1/2$. All other efficiencies are zeros.

 If there are no resonance, that is if $\lambda_j-\lambda_k \notin (\lambda_l-\lambda_m) \mathbf{Z} $ for every $\{l,m\}\neq \{j,k\}$, one could consider the choice of a periodic Dirac pulse $t\mapsto \sum_{n \in \mathbf{Z}} \delta_{n 2 \pi |\lambda_j-\lambda_k|}(t)$, whose efficiency with respect to transition $(j,k)$ is equal to one. If one desires to avoid the use of distributional control laws (Equation \eqref{eq:main} has then to be understood in the measure sense), one may consider standard $L^1$ function close enough to Dirac pulses. Examples are presented in Section \ref{SEC_ex_rotation}.

 Finally, let us mention the algorithm described in \cite{Schrod2} allows to build $u^{\ast}$ positive and piecewise constant. If $\lambda_j-\lambda_k=1$ and the only resonances to be considered are such that $|\lambda_l-\lambda_m| \in \{a_1,a_2,\ldots,a_p\}$, then the efficiency of $u^{\ast}$ with respect to transition $(j,k)$ is $$ \prod_{n=1}^p \cos \left (\frac{\pi}{2 a_n} \right ).$$
   In the worst case, this algorithm guarantees  $E^{(j,k)}(u)\geq \prod_{n \geq 2} \cos (\pi/2n) >0.43$, what may seem poor with respect to the cosine law.  However, this algorithm is especially useful to handle the case of  high order resonances. Indeed, if $a_1,a_2,\ldots,a_p$ are all greater than $N$, then the efficiency with respect to transition $(j,k)$ is greater than $$\exp \left (-\frac{\pi^2}{4N} -\frac{\pi^4}{48 N^3}\right ),$$ which tends to one as $N$ tends to infinity.

\section{Rotation of a planar molecule}\label{SEC_ex_rotation}

In this Section, we apply our results to the well studied example of the rotation of a planar molecule (see, for instance, \cite{salomon-HCN,noiesugny-CDC, Schrod2}).
\subsection{Presentation of the model}

We consider a linear molecule with fixed length and center of mass. We assume that the molecule is constrained to stay in a fixed plane and that its only degree of freedom is the rotation, in the plane, around its center of mass. The state of the system at time $t$ is described by a point $\theta \mapsto \psi(t,\theta)$ of $L^2(\Omega,\mathbf{C})$ where $\Omega=\mathbf{R}/2\pi \mathbf{Z}$ is the one dimensional torus.
The Schr\"odinger equation writes
\begin{equation}\label{EQ_Schrod_circle}
\mathrm{i} \frac{\partial \psi}{\partial t}(t,\theta)= -\Delta \psi(t,\theta) + u(t) \cos(\theta) \psi(t,\theta),
\end{equation}
where $\Delta$ is the Laplace-Beltrami operator on $\Omega$.
The  self-adjoint operator $-\Delta$ has purely discrete spectrum $\{k^2,k \in \mathbf{N}\}$. All its eigenvalues are double but zero which is simple. The eigenvalue zero is associated with the constant
functions. The eigenvalue $k^2$ for $k>0$ is associated with the two eigenfunctions $\theta \mapsto\frac{1}{\sqrt{\pi}} \cos(k \theta)$ and $\theta \mapsto \frac{1}{\sqrt{\pi}} \sin(k \theta)$. The Hilbert space $H=L^2(\Omega,\mathbf{C})$ splits in two subspaces $H_e$ and $H_o$, the spaces of even and odd functions of $H$ respectively. The spaces $H_e$ and $H_o$ are stable under the dynamics of (\ref{EQ_Schrod_circle}), hence no global controllability is to be expected in $H$.

\subsection{Non-resonant case}\label{SEC_strongly}

We first focus on the space $H_o$. The restriction $A$ of $\mathrm{i} \Delta$ to $H_o$ is skew adjoint, with simple eigenvalues $(-\mathrm{i}k^2)_{k\in \mathbf{N}}$ associated to the eigenvectors 
$$
\left (\phi_k:\theta \mapsto \frac{1}{\sqrt{\pi}} \sin(k \theta) \right )_{k\in \mathbf{N}}.
$$ 
The restriction $B$ of $\psi\mapsto -\mathrm{i} \cos(\theta) \psi$ to $H_o$ is skew-adjoint and bounded. The pair $(A,B)$ satisfies Assumption \ref{ass:ass} and is weakly-coupled (see \cite[Section~III.C]{weakly-coupled}).

The Galerkin approximations of $A$ and $B$ at order $N$ are
$$
A^{(N)}= - \left ( \begin{array}{cccc}
                \mathrm{i} & 0 & \cdots & 0 \\
		0 	& 4 \mathrm{i} & \ddots & \vdots \\
		\vdots & \ddots & \ddots & 0\\
		0 & \cdots & 0 & N^2 \mathrm{i}
                \end{array}
\right )  \mbox{ ~~and }$$
$$
B^{(N)}= - \mathrm{i}\left ( \begin{array}{ccccc}
                0 & 1/2 & 0 & \cdots & 0 \\
		 1/2 & 0 & 1/2 & \ddots & \vdots \\
		0  & \ddots & 0  &\ddots  & 0\\
		\vdots & \ddots  & 1/2 & 0 & 1/2\\
		0 & \cdots & 0 & 1/2 & 0
                \end{array}
\right ).
$$

Our aim is to transfer the wave function from the first eigenspace to the second one.
The numerical simulation will be done on some finite dimensional space $\mathbf{C}^N$. The controls we will use in the following have $L^1$ norm less than $13/3$ and, from Proposition \ref{PRO_croissance_norme_A}, the $|A|^{\frac{1}{2}}$ norm of $\Upsilon^u_t(\phi_1)$ will remain less than $\exp(13/2)\approx 665$ for all time. From \cite[Remark 4]{weakly-coupled},
the error made when replacing the original system by its Galerkin approximation  of order $\sqrt{13 e^{13/2}/3 /10^{-2}}\approx 288228$  is smaller than $\varepsilon=10^{-2}$. This estimate is indeed very conservative and it can be improved using the regularity of the operator $B$.

From \cite[Section~IV.C]{weakly-coupled}, for every integer $l$, for every $t$ in $[0,+\infty)$, for every locally integrable control $u$ (not necessarily periodic),
$$ 
|\langle \phi_{k+1}, \Upsilon^{u}_t\phi_1 \rangle | \leq \frac{1}{k!} \left ( \int_0^t \!\!\!|u(\tau)|\mathrm{d}\tau   \right )^{k}.
$$
As a consequence, if $\|u\|_{L^1}\leq 13/3$, then $\|\pi_{22} B(\mathrm{Id}-\pi_{22})\Upsilon_t^u (\phi_1)\|\leq 5.10^{-7}$ for every $t$ in $[0,+\infty)$. Using this inequality, one gets that the error made when replacing the original system by its Galerkin approximation  of order $22$  is smaller than $\varepsilon=3.10^{-6}$ when  $\|u\|_{L_1}\leq 13/3$.

The transition between the levels $1$ and $2$ is resonant, indeed, $5^2-4^2=9=3(2^2-1^2)$. Nevertheless, for every $\{l_1,l_2\}\neq\{1,2\}$ such that $\lambda_{l_1}-\lambda_{l_2} \in 3 \mathbf{Z}$ and $\langle \phi_{l_1},B\phi_{l_2}\rangle \neq 0$, one has $l_1>2$ and $l_2>2$. Hence, for every $\frac{2\pi}{3}$-periodic function $u$, the limit of the propagator $X^{u/n}_{(N)}(t,0)$  leaves invariant the subspace generated by $\phi_1$ and $\phi_2$ and the result of Theorem \ref{PRO_main_result} applies (without having to check that all efficiencies of $u$ for the transition $(l_1,l_2)$ with $l_1-l_2\in {3}(\mathbf{Z}\setminus\{1\})$ are zero).

We illustrate the notion of efficiency on some examples of control, namely $u^{\ast}:t\mapsto \cos^l(3t)$ for $l\in \{1,2,3,4,5\}$.

The efficiency is zero when $l$ is even. In numerical simulations, the quantity $|\langle \phi_2,X^{u^{\ast}}_{(22)}(t,0)\phi_1 \rangle |$ is less than $2.10^{-5}$ for every $t<500$ (see Figure 1 for $l=2$).

When $l$ is odd, the efficiency is not zero. To estimate numerically the efficiency, one considers, for  $n\in \{1,10,30\}$, the
first maximum $p^{\dag}$  of $t\mapsto |\langle \phi_2,X^{u^{\ast}/n}_{(N)}(t,0)\phi_1 \rangle |$, reached at time $t^{\dag}$, and computes
$$\frac{(1-p^{\dag})n\pi}{2|\langle \phi_1, B \phi_2\rangle | \int_0^{t^{\dag}}\!|u^{\ast}(\tau)| \mathrm{d}\tau}.$$
The \textbf{\texttt{Scilab}} source codes used for the simulation are available on the web page
\cite{webpage}. We sum up the results in Table 1.
\begin{table}
\renewcommand{\arraystretch}{1.3}
\caption{Numerical Efficiencies of some periodic shapes}
\label{table_example}\begin{tabular}{|c||c|c|c|c|}
\hline
Control $u^{\ast}$ &  $n$ & Time $t^{\dag}$ & Precision  & Numerical \\
(Efficiency) & &                  &   $1-p^{\dag}$          & Efficiency\\
\hline
 & $n=1$ & $6.8$ & $2. 10^{-2}$ & 73\% \\ \cline{2-5}
$t\mapsto \cos(3t)$
 & $n=10$   & 63  & $4.10^{-4}$ & 78\% \\ \cline{2-5}
$\pi/4\approx 79\%$  & $n=30$   & 189 & $3.10^{-5}$ & 78\%  \\
\hline
& $n=1$ & $8.9$ & $2. 10^{-2}$ & 83\% \\ \cline{2-5}
$t\mapsto \cos(3t)^3$
 & $n=10$   & 84  & $2.10^{-4}$ & 88\% \\ \cline{2-5}
$9 \pi/32 \approx 88\%$  & $n=30$   & 252 & $2.10^{-5}$ & 88\%  \\
\hline
& $n=1$ & $10$ & $7. 10^{-3}$ & 93\% \\ \cline{2-5}
$t\mapsto \cos(3t)^5$
 & $n=10$   & 101  & $2.10^{-4}$ & 92\% \\ \cline{2-5}
$75 \pi/256\approx 92\%$  & $n=30$   & 302 & $2.10^{-5}$ & 92\%  \\
\hline
\end{tabular}
%
\end{table}

\begin{figure}[thpb]
 \centering
 \includegraphics[width=8.6cm]{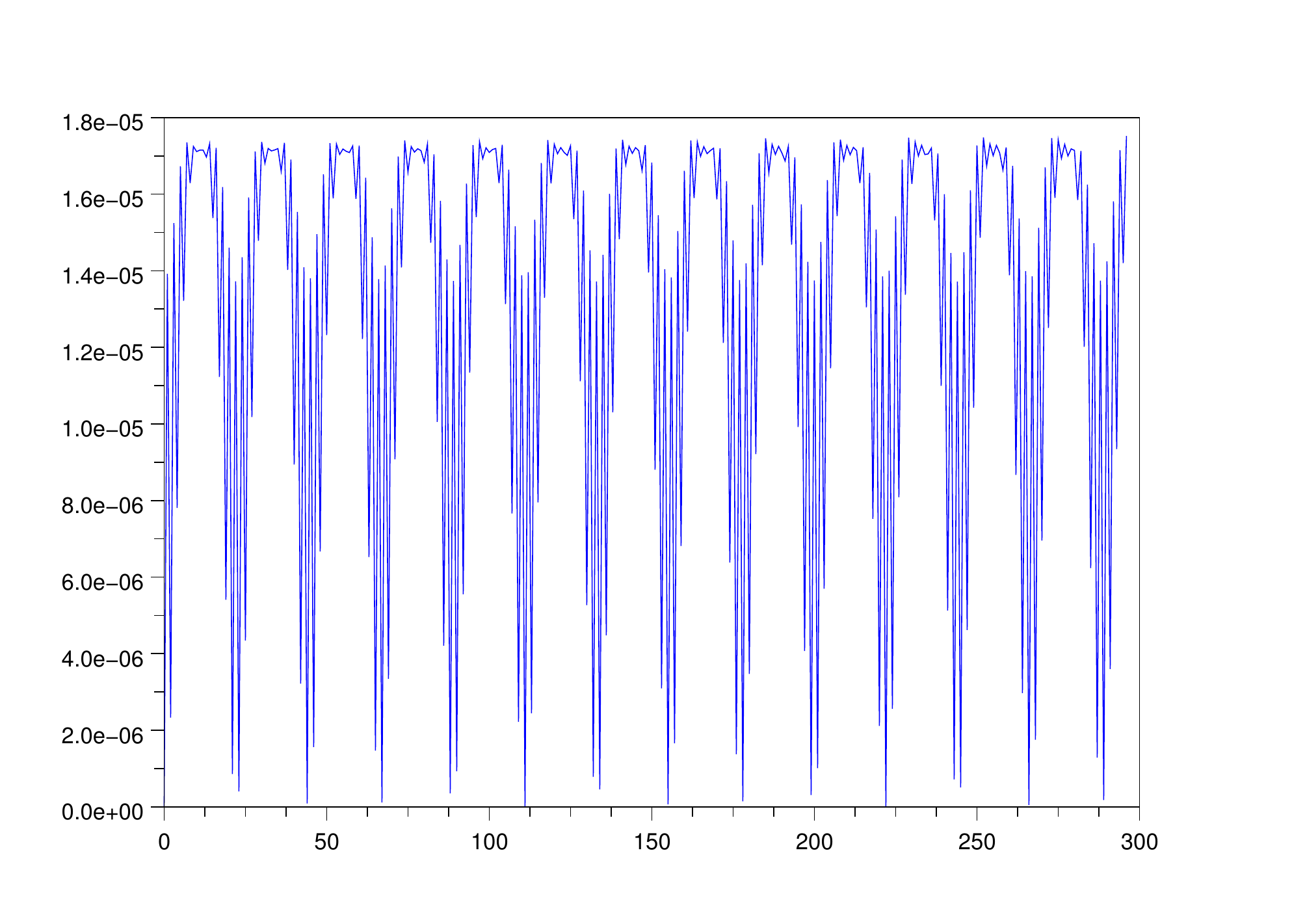}
 \caption{Evolution of the square of the modulus of the second coordinate when applying the control $:t\mapsto \cos^2(3t)/30$ on the planar molecule (odd subspace) with initial condition $\phi_1$. The simulation has been done on a Galerkin approximation of size $N=22$.}
\end{figure}

 \begin{figure}[thpb]
 \centering
\includegraphics[width=8.6cm]{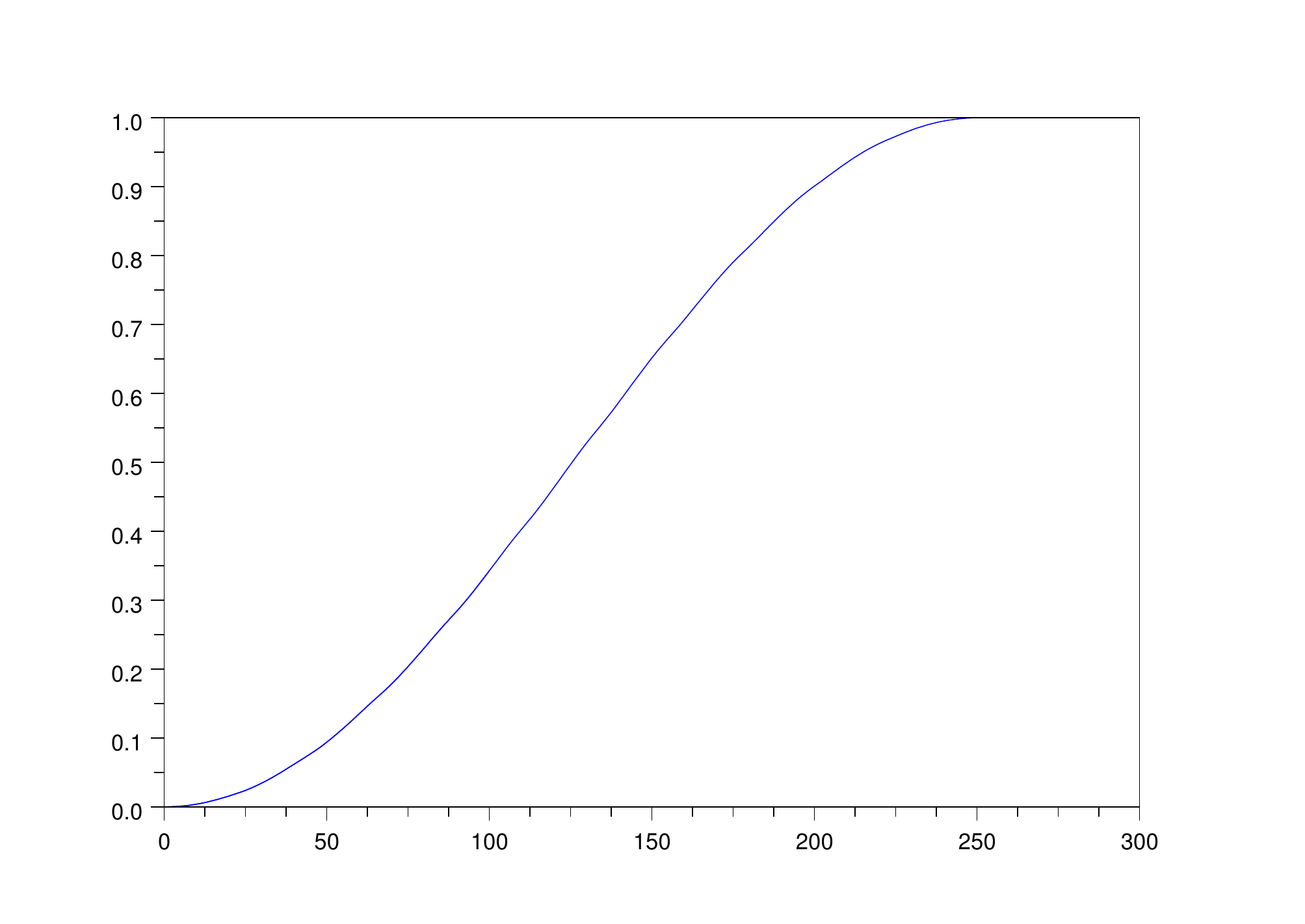}
\caption{Evolution of the  square of the modulus of the second coordinate when applying the control $t\mapsto \cos^3(3t)/30$  on the planar molecule (odd subspace) with initial condition $\phi_1$. The simulation has been done on a Galerkin approximation of size $N=22$.}
\end{figure}

\subsection{Resonant case}\label{SEC_not_strongly}
We focus on the space $H_e$. The restriction $A$ of $\mathrm{i} \Delta$ to $H_e$ is skew adjoint, with simple eigenvalues $(-\mathrm{i}k^2)_{k\in \mathbf{N} \cup \{0\}}$ associated to the eigenvectors $(\phi_k)_{k\in \mathbf{N} \cup \{0\}}$, with $\phi_k:\theta  \mapsto \frac{1}{\sqrt{\pi}} \cos(k \theta)$ for $k$ in $\mathbf{N}$ and $\phi_0: \theta \mapsto \frac{1}{\sqrt{2\pi}}$. The restriction $B$ of $\psi\mapsto -\mathrm{i}\cos(\theta) \psi$ to $H_e$ is skew-symmetric. The pair $(A+\mathrm{i},B)$ satisfies Assumption \ref{ass:ass}. The translation from $A$ to $A+\mathrm{i}$ induces just  a phase shift and will be neglected in the following.

The Galerkin approximation of $A$ and $B$ at order $N$ are
$$
A^{(N)}= - \left ( \begin{array}{cccc}
                0 & 0 & \cdots & 0 \\
		0 	&   \mathrm{i} & \ddots & \vdots \\
		\vdots & \ddots & \ddots & 0\\
		0 & \cdots & 0 & (N-1)^2 \mathrm{i}
                \end{array}
\right ) \mbox{~~and}$$
$$
B^{(N)}= - \mathrm{i}\left ( \begin{array}{ccccc}
                0 & 1/\sqrt{2} & 0 & \cdots & 0 \\
		1/\sqrt{2} & 0 &1/2 & \ddots & \vdots \\
		0  & \ddots & 0  &\ddots  & 0\\
		\vdots & \ddots  & 1/2 & 0 & 1/2\\
		0 & \cdots & 0 & 1/2 & 0
                \end{array}
\right ).
$$

Our aim is to transfer the population from the first eigenspace, associated with eigenvalue $0$, to the second one, associated with eigenvalue $\mathrm{i}$. The transition $(1,2)$ is resonant (indeed $2^2-1^2=3=3(1^2-0^2)$), and 
unlike what happens on the space of odd eigenfunctions, the limit matrix $M^{\dag}$ does not necessarily stabilize the space spanned by $\phi_1$ and $\phi_2$ for every $2\pi$-periodic function $u^{\ast}$. Note however that $B$ only connects level 2 to levels 1 and 3. In other words, it is enough to find a $2\pi$-periodic function $u^{\ast}$ such that
$E^{(2,3)}(u^{\ast})$ is zero and $E^{(1,2)}(u^{\ast})$ is not zero (and as large as possible) to induce the desired transfer. This is achieved, for instance, with the sequence of piecewise constant controls build in \cite{Schrod2}, for which the efficiency with respect to transition $(1,2)$ tends to $\cos(\pi/6)$
and the efficiency with respect to transition $(2,3)$ is zero. Another example is presented on Figure 3.

 \begin{figure}
  \centering
 \includegraphics[width=8.6cm]{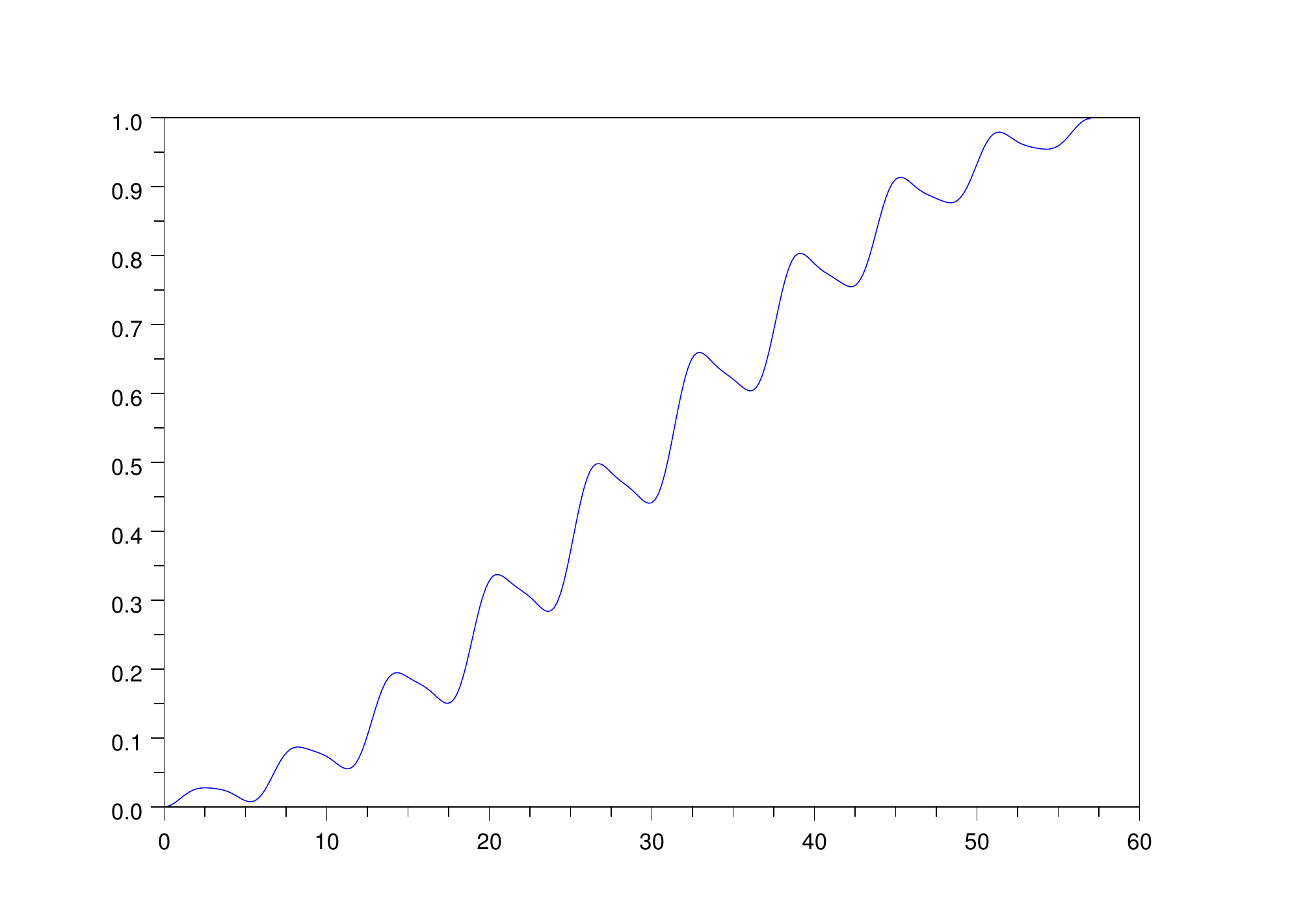}
\caption{Evolution of the square of the modulus of the second coordinate when applying the control $\frac{3}{40} \cos(t) +\frac{1}{10}$ on the planar molecule (even subspace) with initial condition $\phi_1$. The simulation has been done on a Galerkin approximation of size $N=22$. Precision $1-p^{\dag}$ is equal to $2. 10^{-3}$. Numerical efficiencies are  $38\%$ (theoretical: $3/8$) for the transition $(1,2)$ and less than $5.10^{-4}$ for the transition $(2,3)$ (theoretical: $0$).}
 \end{figure}

\addtolength{\textheight}{-8.2cm}
\section{CONCLUSIONS AND FUTURE WORKS}

\subsection{Conclusions}
The contribution of this paper is twofold. First, we have shown how simple regularity hypotheses can be used to approach with arbitrary precision an infinite dimensional system with its finite dimensional Galerkin approximations. Using this finite dimensional reduction, we then used classical averaging techniques to obtain a proof of a well known experimental result about periodic control laws for the bilinear Schr\"{o}dinger equation. As byproduct, we introduced the notion of efficiency, which characterizes the quality of the shape of a given control law.

\subsection{Future Works}
Most of the points in this paper are merely a starting point to further investigations. Among other, we plan to study the generalization of the notion of weakly-coupled systems 
for systems with continuous or mixed spectrum.

\section{ACKNOWLEDGMENTS}

It is a pleasure for the authors to thank Ugo Boscain, Mario Sigalotti, Chitra Rangan and Dominique Sugny for discussions and advices.

This work has been supported by the INRIA Nancy-Grand Est Color ``CUPIDSE'' program.

Second and third authors were partially supported by French Agence National de
la Recherche ANR ``GCM'', program ``BLANC-CSD'', contract number NT09-504590.
The third author was partially supported by European Research Council ERC StG
2009 ``GeCoMethods'', contract number 239748.


\bibliographystyle{plain}
\bibliography{biblio}
%
%
%
%
%

\end{document}